%
%
%
%
\documentclass{amsart}

\usepackage[all]{xy}
\usepackage{amsmath}
\usepackage{amssymb}
\usepackage{array}
\usepackage{mathrsfs}
\usepackage{color}
\usepackage{verbatim}

\newcommand{\adjunction}[4]{\xymatrix{ #1 \ar@<1ex>[rr]^-{#3} && #2 \ar@<1ex>[ll]^-{#4}}}

\newcommand{\Q}{\mathbb{Q}}
\newcommand{\C}{\mathbb{C}}

\newcommand{\Aut}{\operatorname{Aut}}

\newcommand{\map}{\operatorname{map}}

\newtheorem{theorem}{Theorem}[section]

\newtheorem{proposition}[theorem]{Proposition}
\newtheorem{corollary}[theorem]{Corollary}
\newtheorem{lemma}[theorem]{Lemma}

\theoremstyle{definition}
\newtheorem{definition}[theorem]{Definition}
\newtheorem{remark}[theorem]{Remark}
\newtheorem{example}[theorem]{Example}

\def\arbreABC{\vcenter{\xymatrix@R=1pt@C=1pt{
&&&&&&\\
&*{}\ar@{-}[ur] &*{\cdot}&*{\cdot}&*{\cdot}&& \\
&&&&&&\\
&&&*{}\ar@{-}[uuurrr] \ar@{-}[uuulll] \ar@{-}[d] &&&\\
&&&&&&
}}}

\def\arbreBAC{\vcenter{\xymatrix@R=1pt@C=1pt{
&&&&&&\\
&&& &&& \\
&& &&&&\\
&&&*{}\ar@{-}[uuurrr] \ar@{-}[uuulll] \ar@{-}[d] &&&\\
&&&&&&
}}}

\def\arbreACA{\vcenter{\xymatrix@R=1pt@C=1pt{
&&&&&&\\
&*{}\ar@{-}[ur] &&&&*{}\ar@{-}[ul] & \\
&&&&&&\\
&&&*{}\ar@{-}[uuurrr] \ar@{-}[uuulll] \ar@{-}[d] &&&\\
&&&&&&
}}}

\def\arbreCAB{\vcenter{\xymatrix@R=1pt@C=1pt{
&&&&&&\\
&&&*{}\ar@{-}[ur] &&& \\
&&&&*{}\ar@{-}[uull] &&\\
&&&*{}\ar@{-}[uuurrr] \ar@{-}[uuulll] \ar@{-}[d] &&&\\
&&&&&&
}}}

\def\arbreCBA{\vcenter{\xymatrix@R=1pt@C=1pt{
&&&&&&\\
&&&*{\cdots}&&& \\
&&&&&&\\
&&&*{}\ar@{-}[uuulll] \ar@{-}[uuurr]\ar@{-}[uuull]\ar@{-}[uuurrr] \ar@{-}[d] &&&\\
&&&&&&
}}}

\def\arbreABCD{\vcenter{\xymatrix@R=1pt@C=1pt{
&&&&&&&&\\
&*{}\ar@{-}[ur] &&&&&&& \\
&&*{}\ar@{-}[uurr] &&&&&&\\
&&&*{}\ar@{-}[uuurrr] &&&&&\\
&&&&*{}\ar@{-}[uuuurrrr] \ar@{-}[uuuullll] \ar@{-}[d] &&&&\\
&&&&&&
}}}

\def\arbreACAD{\vcenter{\xymatrix@R=1pt@C=1pt{
&&&&&&&&\\
&*{}\ar@{-}[ur] &&&&*{}\ar@{-}[ul] &&& \\
&&&&&&&&\\
&&&*{}\ar@{-}[uuurrr] &&&&&\\
&&&&*{}\ar@{-}[uuuurrrr] \ar@{-}[uuuullll] \ar@{-}[d] &&&&\\
&&&&&&
}}}

\def\arbreBADA{\vcenter{\xymatrix@R=1pt@C=1pt{
&&&&&&&&\\
&&&*{}\ar@{-}[ul] &&&&*{}\ar@{-}[ul] & \\
&&*{}\ar@{-}[uurr] &&&&&&\\
&&& &&&&&\\
&&&&*{}\ar@{-}[uuuurrrr] \ar@{-}[uuuullll] \ar@{-}[d] &&&&\\
&&&&&&
}}}

\numberwithin{equation}{section}



\begin{document}
\author{Sang Xie}
\address{School of Mathematical Sciences and LPMC,
         Nankai University,
         Tianjin 300071, P.R.China}

\email{xiesang@mail.nankai.edu.cn}

\author{Jian Liu}
\address{School of Mathematical Sciences and LPMCC,
         Nankai University,
         Tianjin 300071, P.R.China}

\email{
liujian2@mail.nankai.edu.cn}
\author{Xiugui Liu*}
\address{School of Mathematical Sciences and LPMC,
         Nankai University,
         Tianjin 300071, P.R.China}
\email{xgliu@nankai.edu.cn}
\thanks{* Corresponding author\\
\indent The authors were supported in part by the National Natural Science Foundation of China (Grant No. 11571186), Tianjin Natural Science Foundation (Grant No. 19JCYBJC30200) and
the Fundamental Research Funds for the Central Universities, Nankai University (Grant No. 63191702).}
\subjclass[2010]{55P62, 54C35}
\date{}
\title{Rational homotopy type of mapping spaces via cohomology algebras
}
\maketitle
$\textbf{Abstract}$: In this paper, we show that for finite $CW$-complexes $X$ and two-stage space $Y$ (for example $n$-spheres $S^n$, homogeneous spaces and $F_0$-spaces), the rational homotopy type of $\map(X, Y)$ is determined by the cohomology algebra $H^*(X; \Q)$ and the rational homotopy type of $Y$. From this, we deduce the existence of H-structures on a component of the mapping space $\map(X, Y)$, assuming the cohomology algebras of $X$ and $Y$ are isomorphism. Finally, we will show that $\map(X, Y; f)\simeq\map(X, Y; f')$ if the corresponding  \emph{Maurer-Cartan elements} are connected by an algebra automorphism of $H^\ast(X, \Q)$. 

\textbf{Key words:} Rational homotopy type, Mapping space, $L_\infty$-algebra, H-space.
\maketitle
\section{Introduction}
 
For two topological spaces $X$ and $Y$, let $\map(X, Y)$ be the mapping space of all (free, continuous) maps of  $X$ into $Y$. A fundamental problem is to describe the homotopy type of the mapping space $\map(X, Y)$ in terms of the homotopy types of $X$ and $Y$. Following the work of Haefliger \cite{82AH}, M\o{}ller-Raussen \cite{85M-R} described the the rational homotopy classification problem for the components of some mapping spaces $\map(X, Y)$. In particular, M\o{}ller-Raussen gave an elegant formula for the rational homotopy type of the components of $\map(X, S^n_\Q)$ in terms of the cohomology algebra $H^*(X; \Q)$ and the Sullivan model of $S^n_\Q$. In fact, M\o{}ller-Raussen proved that if $X$ is rationally $(n)$-coconnected space, i.e. $H^{\ge n}(X;\Q)=0$ and the rational space $Y$ has the Sullivan model of the form $(\Lambda (x, y), dy=x^m)$ with $|y|\ge n$ ($|y|$ denotes the degree of $y$), then the rational homotopy type of the components of $\map(X, Y)$ is depended on the cohomology algebra $H^*(X; \Q)$ and the Sullivan model of $Y$.

Recently, the rational homotopy type of mapping spaces seem to be well described in terms of the theory of $L_\infty$-algebra \cite{15AB,13B-M,16B-G,13AL}. By using the homotopy transfer theorem for $L_\infty$-algebra, Buijs and Guti\'errez \cite{16B-G} gave an $L_\infty$-model for mapping space $\map(X, Y)$ with the cohomology of $X$ and the $L_\infty$-model of $Y$. Inspired by their work, we extend M\o{}ller-Raussen's  result by the following theorem
\begin{theorem}\label{theorem 1}
Let $X$ be a connected and finite nilpotent CW-complex, and let $Y$ be a connected rational space that has the minimal Sullivan model of the form 
\begin{equation}\label{eq1}
(\Lambda(P\oplus Q), dP=0,  dQ\subset \Lambda P),
\end{equation}
where $P$ and $Q$ are graded spaces of finite type. Then the rational homotopy type of $\map(X, Y)$ is  determined by the cohomology algebra $H^*(X; \Q)$ and the Sullivan algebra $(\Lambda(P\oplus Q), d)$.
\end{theorem}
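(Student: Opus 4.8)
The plan is to realize the mapping space by an $L_\infty$-model built from $X$ and $Y$ and then to push the whole structure down onto the cohomology of $X$ by the homotopy transfer theorem, exploiting the very restrictive shape of the Sullivan model of $Y$. Concretely, I would fix a finite-dimensional augmented CDGA model $A$ of $X$ (which exists since $X$ is a finite nilpotent complex) and let $L_Y$ be the minimal $L_\infty$-algebra dual to $(\Lambda(P\oplus Q),d)$; by minimality $L_Y$ carries zero differential. Building on the $L_\infty$-description of mapping spaces (Buijs--Guti\'errez and the general theory), the completed $L_\infty$-algebra $A\otimes L_Y$, with brackets $l_k(a_1\otimes x_1,\dots,a_k\otimes x_k)=\pm(a_1\cdots a_k)\otimes l_k(x_1,\dots,x_k)$, is a model for $\map(X,Y)$: its Maurer--Cartan moduli set computes $\pi_0$, and the twist of $A\otimes L_Y$ at a given element models the corresponding component. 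Since any two CDGA models of $X$ are quasi-isomorphic and the construction is invariant under $L_\infty$-quasi-isomorphism, it suffices to replace $A$ by $H:=H^*(X;\Q)$ and to show that the resulting $L_\infty$-structure on $H\otimes L_Y$ depends only on the \emph{algebra} structure of $H$ together with $L_Y$.

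First I would translate the two hypotheses into the dual language. Writing $L_Y=L_P\oplus L_Q$ with $L_P=s^{-1}P^\#$ and $L_Q=s^{-1}Q^\#$, the $k$-ary bracket $l_k$ is dual to the component $d_k\colon P\oplus Q\to\Lambda^k(P\oplus Q)$ of the differential, with the generator being differentiated indexing the \emph{output} of $l_k$ and the monomial in $d_k$ indexing its \emph{inputs}. Thus $dP=0$ says that no bracket has output in $L_P$, while $dQ\subset\Lambda P$ says that every bracket with output in $L_Q$ takes all of its inputs in $L_P$. Consequently the only nonzero operations of $L_Y$ are the maps $l_k\colon\Lambda^kL_P\to L_Q$; this is precisely the two-stage condition advertised in the abstract.

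Next I would transfer the $L_\infty$-structure of $A\otimes L_Y$ along the contraction $(i\otimes 1,\,p\otimes 1,\,h\otimes 1)$, where $(i,p,h)$ is a contraction of $A$ onto $H$ satisfying the usual side conditions. The homotopy transfer theorem writes each transferred operation as a sum over rooted trees whose leaves carry $i\otimes 1$, whose internal edges carry $h\otimes 1$, whose vertices carry the brackets of $A\otimes L_Y$, and whose root carries $p\otimes 1$. The crucial point is that an internal edge is the identity on the $L_Y$-factor, so along any edge the output of one bracket is fed directly into the next. By the previous paragraph every bracket has output in $L_Q$, whereas a nonzero bracket requires all inputs in $L_P$; hence every tree with at least two vertices contributes zero. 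Only the one-vertex corolla survives, with value $(p\otimes 1)\circ(\mu_n^A\otimes l_n)\circ(i\otimes 1)^{\otimes n}$, whose $A$-factor sends $(a_1,\dots,a_n)$ to the cup product $a_1\cdots a_n$ in $H$. Thus the transferred structure is exactly the naive one $\widehat{l}_n(a_1\otimes x_1,\dots)=\pm(a_1\cdots a_n)\otimes l_n(x_1,\dots,x_n)$, manifestly determined by the graded algebra $H^*(X;\Q)$ and the brackets of $L_Y$.

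Finally, since homotopy transfer supplies an $L_\infty$-quasi-isomorphism $A\otimes L_Y\simeq H\otimes L_Y$, and such a quasi-isomorphism induces a bijection on gauge classes of Maurer--Cartan elements together with equivalences of the associated twisted $L_\infty$-algebras, the rational homotopy type of $\map(X,Y)$ and of each of its components is computed by $H^*(X;\Q)\otimes L_Y$, which depends only on the cohomology algebra of $X$ and on the Sullivan algebra of $Y$. I expect the main obstacle to be exactly the vanishing-of-composite-brackets step: one must pin down the dualization, with the correct suspensions and signs, so that the two Sullivan conditions genuinely force bracket outputs into $L_Q$ and inputs out of $L_Q$, and one must check that the tensor contraction and the tree formula interact as claimed. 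A secondary technical point is to guarantee convergence and (pro)nilpotence of the completed $L_\infty$-algebras, so that the Maurer--Cartan theory and the component-wise equivalences apply; this is ensured by the finiteness of $X$ and the finite-type, nilpotent hypotheses on $Y$.
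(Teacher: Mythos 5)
Your proposal is correct and follows essentially the same route as the paper: the same $L_\infty$-model $A\otimes L$ for $\map(X,Y)$, the same dualization of the conditions $dP=0$ and $dQ\subset\Lambda P$ into the vanishing of nested brackets (outputs in $s^{-1}Q^{\#}$, inputs forced into $s^{-1}P^{\#}$), and the same homotopy-transfer tree argument showing that only the corolla survives, yielding $\ell'_k(h_1\otimes x_1,\dots,h_k\otimes x_k)=\pm(h_1\cdots h_k)\otimes[x_1,\dots,x_k]$. The only differences are presentational (the paper runs an induction on $k$ with explicit trees for $k=2,3$, while you argue uniformly that every tree with at least two vertices dies), so no further comparison is needed.
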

\begin{remark}
In Theorem 1.4 of \cite{97B-S} it is shown that it is always possible to obtain a model of $\map(X, Y)$ of the form $(\Lambda (V\otimes H), d)$, where $(\Lambda V, d^\prime)$ is a Sullivan model of $Y$ and $H=H_{\ast}(X; \Q)$. But the differential $d^\prime$ depends on the structure of the coalgebra model of $X$ instead of the coproduct of $H_\ast(X; \Q)$, i.e. the product of $H^\ast(X; \Q)$.
\end{remark}

\begin{remark}
Note that $S^n$ or $\C P^n$ satisfy the hypotheses of $Y$ in Theorem \ref{theorem 1}. 
 More generally, the pure Sullivan algebras and the Sullivan model of homogeneous spaces and $F_0$-spaces also satisfy the hypotheses of $Y$ in Theorem \ref{theorem 1} \cite{GTM205}. 
\end{remark}
 
G. Lupton and S. B. Smith showed that for a CW-complex $Y$, the space $\map(Y, Y; id)$ is a group-like space \cite{08L-S}. We show that
\begin{corollary}Let $Y$ be as in the Theorem \ref{theorem 1}. If $H^\ast(X;\Q)\simeq H^\ast(Y;\Q)$, then there exist a component of mapping space $\map(X,Y)$ which is a rational H-space.
\end{corollary}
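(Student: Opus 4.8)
The plan is to produce the required component of $\map(X,Y)$ by transporting to it the group-like structure that Lupton--Smith \cite{08L-S} exhibit on the identity component $\map(Y,Y;\mathrm{id})$ of the self-mapping space. The mechanism is Theorem \ref{theorem 1}: it tells us that the rational homotopy type of $\map(X,Y)$ is a function of the algebra $H^\ast(X;\Q)$ together with the Sullivan model of $Y$ alone. Since $H^\ast(X;\Q)\cong H^\ast(Y;\Q)$, this is exactly the data that also controls the self-mapping space $\map(Y,Y)$, so one expects a component-by-component rational equivalence $\map(X,Y)\simeq_\Q\map(Y,Y)$, under which the distinguished H-space $\map(Y,Y;\mathrm{id})$ is matched with a component of $\map(X,Y)$.

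First I would check that $\map(Y,Y;\mathrm{id})$ is already a \emph{rational} H-space: it is group-like by \cite{08L-S}, hence an H-space, and since $Y$ is a rational space of finite type the relevant components of $\map(Y,Y)$ are rational spaces, so the H-structure is a rational one. Next I would make the comparison precise through the Buijs--Guti\'errez $L_\infty$-model \cite{16B-G}, which builds from the coalgebra $H_\ast(X;\Q)$ (dual to $H^\ast(X;\Q)$) and the $L_\infty$-model $L_Y$ of $Y$ an $L_\infty$-algebra whose path components are indexed by the gauge classes of its Maurer--Cartan elements. The algebra isomorphism $H^\ast(X;\Q)\cong H^\ast(Y;\Q)$ dualizes to a coalgebra isomorphism $H_\ast(X;\Q)\cong H_\ast(Y;\Q)$, and hence to an isomorphism of the corresponding $L_\infty$-models; this identifies their Maurer--Cartan sets, their path components, and the rational homotopy type of matched components. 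Taking $f\colon X\to Y$ to index the component paired with $\map(Y,Y;\mathrm{id})$ gives $\map(X,Y;f)\simeq_\Q\map(Y,Y;\mathrm{id})$. As admitting an H-structure is invariant under homotopy equivalence---one transports the multiplication and unit along the equivalence and a homotopy inverse---the component $\map(X,Y;f)$ is a rational H-space, which is the assertion.

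The step I expect to be the main obstacle is the legitimacy of describing $\map(Y,Y)$ by the same determination: Theorem \ref{theorem 1}, and the hypotheses of \cite{16B-G}, require a finite nilpotent CW source, whereas $Y$, being rational, is not a finite complex. Here the standing assumptions on $Y$ are what save the argument. Since $Y$ is of the two-stage, elliptic type of \eqref{eq1} with $H^\ast(Y;\Q)$ finite-dimensional, the coalgebra $H_\ast(Y;\Q)$ is of finite type and the purely algebraic $L_\infty$-model $\Hom(H_\ast(Y;\Q),L_Y)$ is well defined; the work is to verify that it genuinely computes the rational homotopy type of the topological space $\map(Y,Y)$, that the component matching carries the honest class $\mathrm{id}_Y$ to an actual component $\map(X,Y;f)$, and that the equivalence is compatible with composition of self-maps. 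A route that sidesteps $\map(Y,Y)$ altogether would be to locate, directly inside the model $\Hom(H_\ast(X;\Q),L_Y)$ of $\map(X,Y)$, a Maurer--Cartan element whose twist yields an $L_\infty$-algebra quasi-isomorphic to one with zero differential and trivial brackets, so that the corresponding component is rationally a product of Eilenberg--MacLane spaces, i.e.\ a rational H-space; the coalgebra isomorphism is exactly the input one would use to build such an element, and proving the requisite vanishing is then the crux.
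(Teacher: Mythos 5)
Your proposal follows essentially the same route as the paper: invoke Theorem \ref{theorem 1} with the hypothesis $H^\ast(X;\Q)\cong H^\ast(Y;\Q)$ to get $\map(X,Y)\simeq_\Q\map(Y,Y)$, match a component of $\map(X,Y)$ with the identity component $\map(Y,Y;\mathrm{id})$, and transport the group-like structure from Lupton--Smith. Your additional worry about applying Theorem \ref{theorem 1} with the rational (hence non-finite) complex $Y$ as source is a legitimate subtlety that the paper's one-line proof passes over in silence, but the core argument is identical.
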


Let $X$ and $Y$ be as in Theorem \ref{theorem 1}. Given an isomorphism $\psi\colon H^\ast(X;\Q)\rightarrow H^{\ast} (X;\Q)$ between the cohomology algebra. Let  $\mathsf{MC}(H^\ast(X; \Q)\otimes L)$ denote the \emph{Maurer-Cartan} set of $H^\ast(X; \Q)\otimes L$ (see Section 3), we have 

\begin{theorem} If $h\otimes l\in \mathsf{MC}(H^\ast(X, \Q)\otimes L)$, then $\psi(h)\otimes l\in \mathsf{MC}(H\otimes L)$ and $\map(X, Y; f)\simeq\map(X, Y; f')$, where $h\otimes l$ and $\psi(h)\otimes l$ represent the maps $f\colon X\rightarrow Y$ and $f'\colon X\rightarrow Y$ respectively.
\end{theorem}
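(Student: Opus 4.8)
The plan is to realize $\psi$ at the level of $L_\infty$-models and to show that the induced map is a \emph{strict} isomorphism of $L_\infty$-algebras, from which both assertions follow at once. Write $A = H^*(X;\Q)$ and let $L$ be the $L_\infty$-model of $Y$ used to build $\MC(A\otimes L)$ in Section 3, so that $\map(X,Y)$ is modelled by the $L_\infty$-algebra $A\otimes L$. The crucial structural feature I would isolate first is that, for the class of targets $Y$ under consideration, the brackets $l_k$ on $A\otimes L$ depend on $A$ only through its graded-commutative product: each bracket has the shape
\begin{equation*}
l_k(a_1\otimes x_1,\dots,a_k\otimes x_k) = \pm\,(a_1\cdots a_k)\otimes \lambda_k(x_1,\dots,x_k),
\end{equation*}
where the sign depends only on the degrees and $\lambda_k$ is built from the brackets of $L$ and does not involve $A$. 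This is exactly the content that makes the cohomology \emph{algebra} (rather than the full transferred $C_\infty$-structure) sufficient, and it is what underlies Theorem \ref{theorem 1}; I would quote it from the preceding development rather than reprove it.

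Granting this, the first step is to check that $\Psi := \psi\otimes\mathrm{id}_L$ is a strict automorphism of $A\otimes L$. Since $A = H^*(X;\Q)$ carries the zero differential, $\Psi$ commutes with $\mathrm{id}_A\otimes d_L$ automatically, and because $\psi$ is a degree-preserving graded-algebra map one has
\begin{equation*}
\Psi\, l_k(a_1\otimes x_1,\dots) = \pm\,\psi(a_1\cdots a_k)\otimes\lambda_k(x_1,\dots) = \pm\,\psi(a_1)\cdots\psi(a_k)\otimes\lambda_k(x_1,\dots) = l_k\big(\Psi(a_1\otimes x_1),\dots\big),
\end{equation*}
so that $\Psi$ intertwines every bracket and has inverse $\psi^{-1}\otimes\mathrm{id}_L$. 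A strict $L_\infty$-morphism carries Maurer–Cartan elements to Maurer–Cartan elements, since $\sum_k \tfrac{1}{k!}\,l_k(\Psi z,\dots,\Psi z) = \Psi\big(\sum_k \tfrac{1}{k!}\,l_k(z,\dots,z)\big)$. Applying this to $z = h\otimes l$ yields $\Psi(h\otimes l) = \psi(h)\otimes l \in \MC(A\otimes L)$, which is the first assertion.

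For the homotopy equivalence I would invoke the standard fact that the path-component $\map(X,Y;f)$ at a map $f$ with representing Maurer–Cartan element $z_f$ is modelled by the \emph{twisted} $L_\infty$-algebra $(A\otimes L)^{z_f}$, whose brackets are $l_k^{z_f}(x_1,\dots,x_k) = \sum_{j\ge 0}\tfrac{1}{j!}\,l_{k+j}(z_f^{\otimes j},x_1,\dots,x_k)$. Because $\Psi$ is a strict isomorphism with $\Psi(z_f) = z_{f'}$, where $z_f = h\otimes l$ and $z_{f'} = \psi(h)\otimes l$, the same bracket computation gives $\Psi\,l_k^{z_f}(x_1,\dots) = l_k^{z_{f'}}(\Psi x_1,\dots)$, so $\Psi$ restricts to a strict $L_\infty$-isomorphism $(A\otimes L)^{z_f}\xrightarrow{\ \cong\ }(A\otimes L)^{z_{f'}}$. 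Since the spatial realization of an $L_\infty$-model is a homotopy invariant, the two components have the same rational homotopy type, which is the claimed equivalence $\map(X,Y;f)\simeq\map(X,Y;f')$.

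The main obstacle is contained entirely in the first paragraph: justifying that the $L_\infty$-structure on $A\otimes L$ uses $A$ only via its product, so that an algebra \emph{isomorphism} — which need not respect any higher transferred products or Massey products — already suffices to produce a strict $L_\infty$-automorphism. For a general target $Y$ the transferred structure would involve the higher $C_\infty$-operations on $H^*(X;\Q)$, and $\psi$ would then have to respect those as well; the two-stage hypothesis $dQ\subset\Lambda P$ on $Y$ is precisely what collapses these contributions, and this is the point that must be borrowed from the proof of Theorem \ref{theorem 1}. Once that is in hand, the bracket bookkeeping and the compatibility of $\Psi$ with twisting are routine.
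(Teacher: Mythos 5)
Your proposal is correct and follows essentially the same route as the paper: both use the fact (from Theorem \ref{theorem 1}) that the brackets on $H\otimes L$ factor through the product of $H$, deduce that $\psi\otimes\mathrm{id}$ is a strict $L_\infty$-automorphism preserving Maurer--Cartan elements, and then transport this to an isomorphism of the twisted (truncated) $L_\infty$-models of the two components. Your write-up is in fact somewhat more explicit than the paper's about why $\psi\otimes\mathrm{id}$ intertwines the twisted brackets, which the paper leaves as ``easily checked.''
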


In the rest of the paper  $X$ will always denote a nilpotent finite CW-complex and $Y$ will always denote a rational finite type CW-complex. We  write $H^\ast(X)$ for $H^\ast(X, \Q)$.

The paper is organized as follows. In Section 2 a brief exposition of the $L_\infty$-model for a mapping spaces and the homotopy transfer theorem for $L_\infty$-algebra is given. Section 3 contains the proof of Theorem \ref{theorem 1} and applications.

\section{$L_\infty$-algebra and the homotopy transfer theorem}
In this section, we recall basic facts about $L_\infty$-algebras. 

\subsection{$L_\infty$-algebra}\begin{definition}
An $L_{\infty}$-algebra $(L,\{\ell_k\})$
is a graded vector space $L$ together with  linear maps 
$$\ell_k\colon L^{\otimes k}\to L,\quad x_1\otimes\cdots\otimes x_k\mapsto [x_1,\cdots,x_k],$$ of degree $k-2$,  for $k\ge 1$, satisfying the following two conditions:
\begin{itemize}
\item[{\rm (i)}] Anti-symmetry,
$$
[\cdots,x,y,\cdots]=-(-1)^{|x||y|}[\cdots,y,x,\cdots],
$$

\item[{\rm (ii)}] The generalized Jacobi identity, 
$$
\sum_{i+j=n+1}\sum_{\sigma\in S(i, n-i)}\varepsilon_{\sigma}\varepsilon(-1)^{i(j-1)}
[[x_{\sigma(1)}\ldots x_{\sigma(i)}], x_{\sigma(i+1)},\ldots, x_{\sigma(n)}]=0,
$$
where $S(i,n-i)$ denotes the set of $(i, n-i)$ shuffles.
\end{itemize}
\end{definition}
An $L_{\infty}$-algebra $(L,\{\ell_k\})$ is called \emph{minimal} if $\ell_1=0$.
For two $L_\infty$-algebras $(L,\{\ell_k\}_{k\ge 1}$ and $(L',\{\ell'_k\}_{k\ge 1}$, an $L_{\infty}$-morphism $f\colon L\to L'$ is a family of skew-symmetric linear maps $\{f^n\colon L^{\otimes n}\to L'\}$ of degree $n-1$ ($n\ge 1$) which satisfy an infinite sequence of equations involving the brackets
$\ell_k$ and $\ell'_k$ (see for instance~\cite{03MK}).
It is called an $L_{\infty}$ \emph{quasi-isomorphism} if $f^{(1)}\colon (L,\ell_1)\to(L',\ell_1')$ is a quasi-isomorphism of complexes.

Recall that a model for a space (not necessarily connected) means that a commutative differential graded algebra whose simplicial realization has the same homotopy type as the singular simplicial approximation of its rationalization \cite{78DS}. Similarly, we say an $L_\infty$-algebra model for a space means that an $L_\infty$-algebra $L$ such that $\mathscr{C}^*(L)$ is a commutative differential graded algebra model, where $\mathscr{C}^*(L)$ is the $\emph{Chevalley-Eilenberg construction}$ for $L_\infty$-algebra \cite{15AB}.

\begin{theorem}\cite[Theorem 1.4]{15AB}\label{thm2}
If $(A, d_A)$ is a finite dimensional commutative differential differential graded algebra model for $X$ and $(L,\{\ell'_k\}_{k\ge1})$ is an $L_\infty$-model for $Y$, then the following $L_\infty$-algebra is a model of $\map(X, Y)$:
$$(A\otimes L,\{\ell_k\}_{k\ge1}),$$ 
where the brackets are defined by
$$\ell_1(x\otimes a)=d_A(x)\otimes a\pm x\otimes \ell'_1(a),$$
$$[x_1\otimes a_1,\cdots,x_r\otimes a_r]=\pm x_1\cdots x_r\otimes[a_1,\cdots, a_r], \quad r>1,$$
where $x, x_1, \cdots,x_r\in A$ and $a,a_1,\cdots,a_r\in L$.
\end{theorem}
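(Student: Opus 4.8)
The plan is to exhibit $A\otimes L$ as the $L_\infty$-algebra whose Maurer--Cartan realization recovers the rationalization of $\map(X,Y)$. Throughout I write $\langle\gl\rangle=\MC_\bullet(\gl)$ for the simplicial Maurer--Cartan space of a complete $L_\infty$-algebra $\gl$, so that $\langle\gl\rangle_n=\MC(A_{PL}(\Delta^n)\htensor\gl)$. I will use that this functor carries $L_\infty$ quasi-isomorphisms of complete algebras to homotopy equivalences, and that $\langle L\rangle\simeq Y_\Q$ since $\mathscr{C}^*(L)$ is a commutative differential graded algebra model of $Y$.

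First I would record the $L_\infty$-structure on the tensor of a commutative differential graded algebra with an $L_\infty$-algebra. For $(A,d_A)$ commutative and $(L,\{\ell'_k\})$ an $L_\infty$-algebra, the graded space $A\otimes L$ carries exactly the brackets in the statement, and a direct check of the generalized Jacobi identities confirms the $L_\infty$-axioms, the commutativity of $A$ supplying the symmetry and signs. Finite-dimensionality of $A$ ensures that $A\otimes L$ is an honest $L_\infty$-algebra of finite type with no completion subtleties, and that it is complete whenever $L$ is, so that $\langle A\otimes L\rangle$ is defined.

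The core of the argument is the exponential law
\[
\map(X,\langle L\rangle)\;\simeq\;\langle A_{PL}(X)\htensor L\rangle,
\]
which I would establish by the standard coend computation for the realization functor. An $n$-simplex of the left-hand side is a simplicial map $X\times\Delta^n\to\langle L\rangle$, equivalently a Maurer--Cartan element of $A_{PL}(X\times\Delta^n)\htensor L$; via the Künneth quasi-isomorphism $A_{PL}(X\times\Delta^n)\simeq A_{PL}(\Delta^n)\htensor A_{PL}(X)$ and the end-formula $A_{PL}(K)=\int_{[n]}A_{PL}(\Delta^n)^{K_n}$ this matches an $n$-simplex of $\langle A_{PL}(X)\htensor L\rangle$. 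I would then replace $A_{PL}(X)$ by the finite-dimensional model $A$: a zig-zag of quasi-isomorphisms between $A$ and $A_{PL}(X)$ induces, after $\htensor L$, a zig-zag of $L_\infty$ quasi-isomorphisms, whence $\langle A\otimes L\rangle\simeq\langle A_{PL}(X)\htensor L\rangle$ by homotopy invariance. Concatenating with $\langle L\rangle\simeq Y_\Q$ and $\map(X,Y_\Q)\simeq\map(X,Y)_\Q$ (valid as $X$ is finite and nilpotent) yields $\langle A\otimes L\rangle\simeq\map(X,Y)_\Q$, that is, $A\otimes L$ is an $L_\infty$-model of $\map(X,Y)$.

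The main obstacle is the exponential law, in two parts. One must justify the coend manipulation together with the convergence of $\MC_\bullet$: the nilpotence of $X$ and the finite type of $L$ have to be used to guarantee that $A_{PL}(X)\htensor L$ and $A\otimes L$ are complete enough for the Maurer--Cartan functor to be homotopy-meaningful and for $\langle-\rangle$ to commute with the cotensor by a finite simplicial set. The second delicate point is homotopy invariance of $\langle-\rangle$ under $L_\infty$ quasi-isomorphisms, needed for the replacement of $A_{PL}(X)$ by $A$; this rests on the obstruction-theoretic deformation theory of Maurer--Cartan elements, and is precisely where the completeness hypotheses are genuinely consumed.
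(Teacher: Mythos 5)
The paper offers no proof of this statement at all: it is imported verbatim from Berglund \cite[Theorem 1.4]{15AB}, so the only meaningful comparison is with that reference. Your proposal follows essentially the same route as Berglund's argument --- realize the mapping space as the simplicial Maurer--Cartan space via the exponential law $\map(X,\langle L\rangle)\simeq\MC_\bullet(A_{PL}(X)\htensor L)$, then replace $A_{PL}(X)$ by the finite-dimensional model $A$ using homotopy invariance of $\MC_\bullet$ under $L_\infty$ quasi-isomorphisms of complete algebras --- and it correctly flags the two points (the K\"unneth/coend step and the completeness hypotheses) where the real technical work of that reference lies.
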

\subsection{The homotopy transfer theorem}
Now, we recall how to transfer $L_\infty$-structure from homotopy retract.
Let $(L, \ell_k)$ be an $L_\infty$-algebra. Consider the following diagram
\begin{equation}\label{hr}
\xymatrix{ \ar@(ul,dl)@<-5.6ex>[]_K  & (L,d )
\ar@<0.75ex>[r]^-q & (H,0) \ar@<0.75ex>[l]^-i }
\end{equation}
in which $H=H(L,\ell_1)$, $i$ is a quasi-isomorphism, $qi={\rm id}_H$ and $K$ is a chain homotopy between ${\rm id}_L$ and $iq$, i.e.,  ${\rm id}_L-iq=\partial K+K\partial $. We encode this data as $(L,i,q,K)$ and call it a {\em homotopy retract of $L$}. In this setting, the classical {\em Homotopy Transfer Theorem}  reads :
\begin{theorem}\cite[Theorem 10.3.5]{12L-V}\label{ht}

There exists an $L_{\infty}$-algebra structure $\{\ell'_k\}$ on $H$, unique up to isomorphism,
    and  $L_{\infty}$ quasi-isomorphisms
$$
\xymatrix{ (L,\partial )
\ar@<0.75ex>[r]^-Q & (H,\{\ell'_k\}) \ar@<0.75ex>[l]^-I }
$$
such that $I^{(1)}=i$ and $Q^{(1)}=q$. Moreover, the
transferred higher brackets can be explicitly described by the following
\begin{equation}\label{f1}
\ell'_k=\sum_{T\in \mathscr{T}_k}\frac{\ell_T}{|\Aut (T)|},
\end{equation}
where $\mathscr{T}_k$ is the set of isomorphism classes of directed binary rooted trees with k leaves, $\Aut (T)$ is the automorphism group of the tree $T$, and $\ell_T$ is described below. $\hfill\qed$
\end{theorem}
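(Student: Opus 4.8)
The plan is to pass to the coalgebra description of $L_\infty$-structures, transfer the codifferential by a homological perturbation argument, and then expand the resulting series into the tree sum \eqref{f1}; this is the route taken in \cite{12L-V}. Recall the dictionary: an $L_\infty$-structure on a graded space $V$ is the same datum as a codifferential (a square-zero coderivation) $D$ on the cofree conilpotent cocommutative coalgebra $S^c(sV)$ cogenerated by the suspension $sV$ (the predual of the Chevalley--Eilenberg complex $\mathscr{C}^*(V)$), where the corestriction of $D|_{S^c_k(sV)}$ to the cogenerators $sV$ recovers the bracket $\ell_k$ up to suspension and Koszul sign. Under this dictionary the generalized Jacobi identities are exactly the equation $D^2=0$, and an $L_\infty$-morphism is exactly a morphism of the associated dg coalgebras. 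I would run the whole argument on this coalgebra level.

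First I would lift the homotopy retract \eqref{hr} to the coalgebra. Decompose the codifferential as $D_L=D_1+D_{\ge 2}$, where $D_1$ is the coderivation generated by the linear part $\ell_1=\partial$ and $D_{\ge 2}$ is generated by the higher brackets. Then $(S^c(sL),D_1)$ is a dg coalgebra with homology $(S^c(sH),0)$, and the tensor (or ``side-conditions'') trick promotes $(i,q,K)$ to a contraction of dg coalgebras with structure maps $\widehat{i},\widehat{q}$ and homotopy $\widehat{K}$, in which $\widehat{i},\widehat{q}$ are coalgebra morphisms, $\widehat{q}\,\widehat{i}=\mathrm{id}$, and $\widehat{K}$ satisfies the side conditions $\widehat{K}^2=0$, $\widehat{K}\widehat{i}=0$, $\widehat{q}\,\widehat{K}=0$; the last two can always be arranged by first normalizing $K$.

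Next I would treat $D_{\ge 2}$ as a perturbation of $D_1$ and invoke the homological perturbation lemma in its coalgebra-compatible form. Because $D_{\ge 2}$ strictly raises coalgebra weight, the geometric series below converge on each finite-weight summand, and the lemma produces a transferred codifferential on $S^c(sH)$,
\begin{equation*}
D'=\widehat{q}\,D_{\ge 2}\,(\mathrm{id}-\widehat{K}D_{\ge 2})^{-1}\,\widehat{i}=\sum_{n\ge 0}\widehat{q}\,D_{\ge 2}\,(\widehat{K}D_{\ge 2})^{n}\,\widehat{i},
\end{equation*}
together with perturbed coalgebra morphisms $I=(\mathrm{id}-\widehat{K}D_{\ge 2})^{-1}\widehat{i}$ and $Q=\widehat{q}\,(\mathrm{id}-D_{\ge 2}\widehat{K})^{-1}$. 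That $D'$ is again a codifferential is exactly the assertion that the maps $\{\ell'_k\}$ read off from $D'$ satisfy the $L_\infty$-relations; that $I,Q$ are coalgebra morphisms says they are $L_\infty$-morphisms. Since all corrections raise weight, their linear parts are $I^{(1)}=i$ and $Q^{(1)}=q$, which are quasi-isomorphisms, so $I$ and $Q$ are $L_\infty$ quasi-isomorphisms. For uniqueness, note that the transferred structure is minimal ($\ell'_1=0$ because $H$ carries the zero differential), and any two minimal $L_\infty$-algebras joined by an $L_\infty$ quasi-isomorphism are isomorphic; hence the structure is determined up to isomorphism.

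Finally I would read off \eqref{f1} by expanding each summand $\widehat{q}\,D_{\ge 2}(\widehat{K}D_{\ge 2})^{n}\widehat{i}$ through the cocommutative coproduct. Each factor $D_{\ge 2}$ contributes an internal vertex decorated by a bracket (of arity equal to the number of its inputs), each $\widehat{K}$ an internal edge, each leaf an $\widehat{i}$, and the root a $\widehat{q}$; collecting terms organizes the sum over rooted trees $T$ with $k$ leaves, the composite along $T$ being $\ell_T$, while the cocommutative symmetrization of the coproduct yields precisely the coefficient $1/|\Aut(T)|$. The main obstacle will be the second and third steps: one must verify that the tensor-trick homotopy $\widehat{K}$ is genuinely compatible with the coproduct and that $D_{\ge 2}$ is a coderivation, so that the perturbation lemma stays inside the category of dg coalgebras and $D'$ is a codifferential rather than merely a square-zero map. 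Equivalently, all suspension shifts and Koszul signs must be tracked so that $D'^2=0$ and the coalgebra-morphism identities survive; this sign and combinatorial bookkeeping, together with the identification of the automorphism weights, is the technical heart of the proof.
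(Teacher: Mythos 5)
The paper offers no proof of this statement: it is imported verbatim from Loday--Vallette \cite{12L-V} (note the \qed placed immediately after the statement), and the only original content nearby is the description of the maps $\ell_T$. So the comparison is between your sketch and the proof in the cited source, not an internal argument. Your route --- encode the $L_\infty$-structure as a codifferential on the cofree conilpotent cocommutative coalgebra $S^c(sL)$, lift the contraction by the tensor trick, run the homological perturbation lemma, and expand the perturbation series into trees --- is a genuine and well-established proof of the theorem (it is the Gugenheim--Stasheff / Huebschmann--Kadeishvili line, used for $L_\infty$-algebras by Berglund among others), but it is not the proof in \cite{12L-V}: there the structure is transferred operadically, by viewing a $P_\infty$-structure as a morphism of dg cooperads into the bar construction of the endomorphism operad and composing with an explicit morphism $\mathrm{B}\operatorname{End}_L\to \mathrm{B}\operatorname{End}_H$ (the van der Laan morphism) built from the retract; the tree formula \eqref{f1} is read off from that morphism rather than from a geometric series. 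The operadic proof buys uniformity over any Koszul operad and delivers the $\infty$-quasi-isomorphisms $I$ and $Q$ with no additional work; your HPL proof is more elementary and makes the convergence and the origin of the trees transparent. Your uniqueness argument (the transferred structure is minimal since $\ell'_1=q\,\partial\, i=0$, and quasi-isomorphic minimal $L_\infty$-algebras are isomorphic) is the standard one and is fine.

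Two points need correction or emphasis. First, the coderivation $D_{\ge 2}$ \emph{lowers} coalgebra word-length (the coderivation generated by $\ell_k$ maps $S^k(sL)$-words to words of length $n-k+1$), it does not raise it; local nilpotence of $\widehat{K}D_{\ge 2}$ on each word-length component still holds, but for that reason, so your convergence claim survives with the direction reversed. Second, the step you defer as ``bookkeeping'' --- that $\widehat{K}$ can be chosen compatible with the cocommutative coproduct so that the perturbation lemma stays in dg coalgebras, $D'$ is a coderivation, and $I$, $Q$ are coalgebra maps --- is precisely the hard part in the symmetric (as opposed to tensor-coalgebra/$A_\infty$) setting: the naive tensor-trick homotopy is not $\Sigma_n$-equivariant and must be symmetrized, which is where characteristic zero enters; a complete write-up must do this, not just flag it. Finally, note that your expansion produces internal vertices of every arity $\ge 2$ (one for each component of $D_{\ge 2}$), which is the correct statement and matches how the paper actually uses \eqref{f1} (e.g.\ the $3$-corolla decorated by $\ell_3$ in the proof of Theorem \ref{theorem 1}), even though the theorem as transcribed says ``binary'' trees; the word ``binary'' there is a slip of the paper, not of your argument.
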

We describe here the item $\ell_T$ in the formula (\ref{f1}). For $T\in \mathscr{T}_k$, we define  the linear map $\ell_T$ as follows. The leaves of the tree $T$ are labeled by the map $i$, each internal edge is labeled by the chain homotopy $K$ and the root edge is labeled by $q$. By moving down from the leaves to the root, we define the linear map $\ell_T$.  For example, the tree 
$$
\xymatrixcolsep{1pc}
\xymatrixrowsep{1pc}
\entrymodifiers={=<1pc>} \xymatrix{
*{^i}\ar@{-}[dr] & *{} & *{^i}\ar@{-}[dl] & *{} & *{^i} \ar@{-}[ddl]\\
*{} & {\ell_2} \ar@{-}[drr]|K & *{} & *{} & *{} \\
*{} & *{} & *{} & \ell_2\ar@{-}[d] & *{} \\
*{} & *{} & *{} & *{_{\stackrel{}{q,}}} & *{} \\
}
$$
giving the map
$$\ell_T=q\circ \ell_2\circ (K\otimes id)\circ(\ell_2\otimes id)\circ (i\otimes i\otimes i).$$

\section{Proof of Theorem \ref{theorem 1} and applications}
We begin with the following
\begin{lemma}\label{lemma3}
Let $(\Lambda V, d)$ be the commutative differential graded algebra of finite type of the form (\ref{eq1}), i.e.,  
$$(\Lambda V, d)=(\Lambda(P\oplus Q), dP=0, dQ\subset \Lambda P).$$
The $\emph{Chevalley-Eilenberg construction}$ gives an $L_\infty$-algebra $({L,\{\ell_k\}_{k\ge1}})$ such that $\mathscr{C}^*(L)=(\Lambda V,d)$.
Then the $L_\infty$-structure $\{\ell_k\}_{k\ge1}$ satisfies
$$[\cdots,[\cdots],\cdots]=0.$$
\end{lemma}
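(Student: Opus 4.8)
The plan is to read off the bracket structure of $L$ directly from the shape of the differential $d$, using the duality built into the Chevalley--Eilenberg construction. Recall that $\mathscr{C}^*(L)=(\Lambda V,d)$ has underlying space $V=(sL)^\vee$ and that the differential decomposes as $d=\sum_{k\geq 1}d_k$, where $d_k\colon V\to\Lambda^k V$ is, up to suspension and sign, the linear dual of the bracket $\ell_k\colon\Lambda^k(sL)\to sL$. Under this duality the \emph{input} of $d_k$ is dual to the \emph{output} of $\ell_k$, while the \emph{output} of $d_k$ is dual to the \emph{input} of $\ell_k$. Writing $L=L_P\oplus L_Q$ for the decomposition dual to $V=P\oplus Q$ (so $P=(sL_P)^\vee$ and $Q=(sL_Q)^\vee$), the whole argument reduces to translating the two hypotheses on $d$ into statements about where each $\ell_k$ can be nonzero.

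First I would translate $dP=0$. Since this says $d_k$ vanishes on $P$ for every $k$, and the input of $d_k$ corresponds to the output of $\ell_k$, it is equivalent to saying that every bracket $\ell_k$ has vanishing $L_P$-component; that is, \emph{every bracket lands in $L_Q$} (this includes $\ell_1$, which is harmless whether or not the model is minimal). Next I would translate $dQ\subset\Lambda P$. Combined with $dP=0$ this says the entire image of each $d_k$ lies in $\Lambda^k P$, and since the output of $d_k$ corresponds to the input of $\ell_k$, it is equivalent to saying that $\ell_k$ is nonzero only when \emph{all $k$ of its inputs come from $L_P$}. Putting the two together, each bracket is a map $\ell_k\colon\Lambda^k L_P\to L_Q$ and vanishes on any argument meeting $L_Q$.

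The conclusion is then immediate. In a nested expression $[\cdots,[\cdots],\cdots]$ the inner operation is some $\ell_j$, whose output lies in $L_Q$ by the first translation. Feeding this $L_Q$-valued element into the outer bracket $\ell_k$ produces zero, because $\ell_k$ is nonzero only when every input lies in $L_P$ and $L_P\cap L_Q=0$. Hence all iterated brackets vanish. I expect the only real bookkeeping to be in pinning down the duality correctly: keeping track of the suspension $s$, the dualization $(-)^\vee$, and the Koszul signs, so that ``input/output of $d_k$'' matches ``output/input of $\ell_k$'' exactly. Once those conventions are fixed, the two hypotheses translate mechanically and the vanishing of nested brackets follows with no further computation.
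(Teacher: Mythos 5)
Your proposal is correct and follows essentially the same route as the paper: both arguments use the Chevalley--Eilenberg duality between $d_k$ and $\ell_k$ to translate $dP=0$ into ``all brackets land in the dual of $Q$'' and $dQ\subset\Lambda P$ into ``all brackets vanish unless every input lies in the dual of $P$,'' whence any nested bracket is zero. The paper merely phrases these two facts as the outcome of ``a straightforward computation'' with the Sullivan pairing $\langle d_k v; sx_1\wedge\cdots\wedge sx_k\rangle=\pm\langle v; s\ell_k(x_1,\dots,x_k)\rangle$, which is exactly the duality you describe.
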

 \begin{proof} 
 Recall that $V$ and $sL$ (the suspension of $L$) are dual graded vector spaces and the $L_\infty$-structure $\{\ell_k\}_{k\ge1}$ is determined by the differential $d=d_1+d_2+\cdots$, where derivations $d_k$ raise the wordlength by $k-1$. More precisely, we have 
 \begin{align*}\label{331}
\langle d_k v;sx_1\wedge ... \wedge sx_k \rangle
&=\pm \langle v;s\ell_k( x_1,\dots, x_k) \rangle\\
&=\sum_{\sigma\in S_k}\pm \langle v_{\sigma(1)};sx_1\rangle \cdots \langle v_{\sigma(k)};sx_k\rangle,
\end{align*}
where  $d_kv= v_1\cdots v_k$, $\langle\cdot~;~\cdot\rangle$ is the Sullivan pairing, $v$, $v_1$, $\cdots$, $v_k\in V$ and $x_1$, $\cdots$, $x_k\in L\cong s^{-1}V^\#$. 
 
A straightforward computation shows that
$$ [\cdots,q,\cdots]=0,\quad q\in s^{-1}Q^\#,$$
and
  $$[p_1,\cdots,p_i]\in \Q q_1\oplus\cdots\oplus \Q q_j,\quad p_1, \cdots,p_i,\in s^{-1}P^\#, q_1,\cdots,q_j\in s^{-1}Q^\#.$$
It follows that
$$[x_1,\cdots,[x_2,\cdots],\cdots]=0, \quad x_1,x_2\in s^{-1}V^\#.$$
 \end{proof}

Suppose that $L$ is the minimal $L_\infty$-model of $Y$ and $(A, d)$  is a finite dimensional
commutative differential graded algebra model of $X$. By Theorem \ref{thm2}, the $L_\infty$-algebra $(A\otimes L,\{\ell_k\}_{k\ge1})$ is the $L_\infty$-model of $\map(X, Y)$. 

Let $H=H(A, d)$, then we have the following decomposition 
\begin{equation}\label{eq3}
A=B\bigoplus dB\bigoplus H
\end{equation}
with basis $\{b_i\}$, $\{\theta b_i\}$, $\{h_j\}$. Note that $d=0$ in $H$ and $d\colon B\to dB$ is an isomorphism. It is easy to check that the decomposition (\ref{eq3}) induces a homotopy retract:

\begin{equation*}
\xymatrix{ \ar@(ul,dl)@<-5.6ex>[]_{K'}  & (A\otimes L,\ell_1 )
\ar@<0.75ex>[r]^-{q'} & (H\otimes L,\ell'_1), \ar@<0.75ex>[l]^-{i'} }
\end{equation*}
where $K'=K\otimes id$, $q'=q\otimes id$, $i'=i\otimes id$.

By Theorem \ref{ht}, we obtain an $L_\infty$-structure $ \{\ell'_k\}_{k\ge1}$ on $H\otimes L$ 
and the following quasi-isomorphism between $L_\infty$-algebras
$$(A\otimes L,\{\ell_k\}_{k\ge1})\stackrel{\simeq}{\longrightarrow}(H\otimes L,\{\ell'_k\}_{k\ge1}).$$
Then, we have

\begin{lemma}\cite{16B-G}
The $L_\infty$-algebra $H\otimes L$ is an $L_\infty$-model of $\map(X, Y)$.
\end{lemma}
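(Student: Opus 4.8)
The plan is to deduce the statement from the two facts already established, by way of the general principle that being an $L_\infty$-model for a space is an invariant of $L_\infty$ quasi-isomorphism type. Recall that, under the definition adopted above, an $L_\infty$-algebra $M$ is an $L_\infty$-model of a space $Z$ precisely when its Chevalley-Eilenberg construction $\mathscr{C}^*(M)$ is a commutative differential graded algebra model of $Z$. Theorem \ref{thm2} already provides that $(A\otimes L,\{\ell_k\}_{k\ge1})$ is an $L_\infty$-model of $\map(X, Y)$, so equivalently $\mathscr{C}^*(A\otimes L)$ is a CDGA model of $\map(X, Y)$. It therefore suffices to connect $H\otimes L$ to $A\otimes L$ by an $L_\infty$ quasi-isomorphism and to transport the modelling property across it.

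First I would invoke the Homotopy Transfer Theorem (Theorem \ref{ht}) applied to the homotopy retract of $(A\otimes L,\ell_1)$ onto $(H\otimes L,\ell_1')$ built from the decomposition (\ref{eq3}). This equips $H\otimes L$ with the transferred structure $\{\ell_k'\}_{k\ge1}$ and produces an $L_\infty$ quasi-isomorphism $(A\otimes L,\{\ell_k\})\to(H\otimes L,\{\ell_k'\})$ whose linear part is $q'=q\otimes\mathrm{id}$. Next I would push this quasi-isomorphism through the Chevalley-Eilenberg functor, which is contravariant, to obtain a morphism of commutative differential graded algebras $\mathscr{C}^*(H\otimes L)\to\mathscr{C}^*(A\otimes L)$.

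The crux of the argument, and the step I expect to be the main obstacle, is to show that this induced map is a quasi-isomorphism of CDGAs, i.e. that $\mathscr{C}^*(-)$ carries $L_\infty$ quasi-isomorphisms to CDGA quasi-isomorphisms. The natural approach is to filter both Chevalley-Eilenberg complexes by word-length and to observe that on the associated graded the induced map is governed by the linear part of the $L_\infty$-morphism, namely $q\otimes\mathrm{id}$. Since $q\colon(A,d)\to(H,0)$ is a quasi-isomorphism and $L$ is of finite type, $q\otimes\mathrm{id}$ is a quasi-isomorphism on the cochain level, and a comparison of the two resulting spectral sequences forces $\mathscr{C}^*(H\otimes L)\to\mathscr{C}^*(A\otimes L)$ to be a quasi-isomorphism. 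The finiteness hypotheses on $A$ and $L$ are what guarantee the filtration is well-behaved and the comparison converges; the vanishing of nested brackets recorded in Lemma \ref{lemma3} further streamlines the transferred structure, although it is the linear part that actually drives the argument.

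Finally, combining the previous steps, $\mathscr{C}^*(H\otimes L)$ is weakly equivalent to $\mathscr{C}^*(A\otimes L)$, which is a CDGA model of $\map(X, Y)$; hence $\mathscr{C}^*(H\otimes L)$ is itself a CDGA model of $\map(X, Y)$, and therefore $H\otimes L$ is an $L_\infty$-model of $\map(X, Y)$, as claimed.
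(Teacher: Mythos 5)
Your proposal is correct and follows essentially the same route as the paper: the paper states this lemma without an independent proof, citing Buijs--Guti\'errez, and its supporting text consists of exactly the chain you describe (Theorem \ref{thm2} gives $A\otimes L$ as a model, the homotopy retract from the decomposition (\ref{eq3}) plus Theorem \ref{ht} gives the $L_\infty$ quasi-isomorphism to $H\otimes L$, and invariance of the modelling property under quasi-isomorphism finishes the argument). The one step you flag as the main obstacle --- that $\mathscr{C}^*$ turns this quasi-isomorphism into a CDGA quasi-isomorphism, which requires care since $H\otimes L$ is not positively graded --- is precisely the content supplied by the cited reference, so your spectral-sequence sketch is an acceptable stand-in for what the paper delegates to \cite{16B-G}.
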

Now, we prove our main result.
\begin{proof}[proof of the Theorem \ref{theorem 1}]

To prove this theorem, it is now enough to prove that
$$\ell'_k(h_1\otimes x_1,\cdots,h_k\otimes x_k)=h_1\cdot_H h_2\cdot_H\ldots\cdot_H h_k\otimes [x_1,\ldots, x_k],\quad k\ge2$$
where $h_1\otimes x_1$, $\cdots$, $h_k\otimes x_k\in H\otimes L,$ and $\cdot_H$ denotes the product of $H$.

We proceed by induction on k. The case $k =1$ follows immediately from the
definition. For $k=2$, the explicit formula for $\ell'_2$ is provided by the tree

$$
\xymatrixcolsep{1pc}
\xymatrixrowsep{1pc}
\entrymodifiers={=<1pc>} \xymatrix{
*{^{i\otimes id}}\ar@{-}[dr] & *{} & *{^{i\otimes id}}\ar@{-}[dl]\\
*{} & \ell_2\ar@{-}[d] & *{} \\
*{} & *{_{\stackrel{}{q\otimes id.}}} & *{} \\
}
$$
Note that $q\circ i=id_H$, we have 
\begin{align*}
\ell'_2(h_1\otimes x_1,h_2\otimes x_2)&=
(q\otimes id)\circ\ell_2\circ (i\otimes id, i\otimes id)(h_1\otimes x_1, h_2\otimes x_2)\\
&=(q\otimes id)\circ\ell_2(h_1\otimes x_1, h_2\otimes x_2)\\
&=q(h_1\cdot_Ah_2)\otimes [x_1,x_2])\\
&=h_1\cdot_Hh_2\otimes [x_1,x_2],
\end{align*}
where $h_1$, $h_2\in H$, $x_1$, $x_2\in L$, and $\cdot_A$ denotes the product of $A$.

For $k=3$, the explicit formula for $\ell'_3$ is provided by the following trees

$$
\xymatrixcolsep{1pc} \xymatrixrowsep{1pc} \entrymodifiers={=<1pc>}
\xymatrix{
 &*{^{i\otimes id}}\ar@{-}[dr]& & *{^{i\otimes id}}\ar@{-}[dl] & *{}&*{}& *{^{i\otimes id}}\ar@{-}[ddll]&
 & *{^{i\otimes id}}\ar@{-}[ddrr]&& *{^{i \otimes id}}\ar@{-}[dd] & *{}&*{^{i \otimes id}}\ar@{-}[ddll]  \\
 && \ell_2\ar@{-}[drr]|{K \otimes id}&   && && & && *{} && \\
 &&*{} & *{} & \ell_2\ar@{-}[d] && & &&*{} & \ell_3\ar@{-}[d] & *{} & \\
 &T_1&*{} & *{} & *{_{\stackrel{}{q \otimes id,}}}& & & &T_2  &*{} & *{_{\stackrel{}{q\otimes id.}}} & *{} &
}
$$
By Lemma \ref{lemma3}, we have
$$[[x_1, x_2],x_3]=0,$$
where $x_1$, $x_2$, $x_3\in L$.
Note that the tree $T_1$ has more than one vertex, we have
\begin{align*}
\ell'_{T_1}(h_1\otimes x_1,h_2\otimes x_2, h_3\otimes x_3)&=(q\otimes id)\circ\ell_2\circ(K\otimes id, id)\circ(\ell_2, id)\circ\\ &(i\otimes id, i\otimes id, i\otimes id)(h_1\otimes x_1, h_2\otimes x_2, h_3\otimes x_3)\\
&=(q\otimes id)\circ\ell_2\circ(K\otimes id, id)(\pm h_1h_2\otimes [x_1, x_2], h_3\otimes x_3)\\
&=(q\otimes id)\circ(\pm K(h_1h_2)h_3\otimes[[x_1, x_2],x_3]\\
&=0,
\end{align*}
where $h_1$, $h_2$, $h_3\in H$, $x_1$, $x_2$, $x_3\in L$. 
This implies that the explicit formula for $\ell'_3$ only depends on the tree $T_2$. 

For $k\ge4$, the formula for $\ell'_k$ is determined by the set of rooted trees with k leaves:
\begin{displaymath}
{\mathscr{T}_k=\bigg\{ \arbreABC}\ ,\  {{}\cdot}\  {{}\cdot }\  {{}\cdot }\ ,{{}\arbreCBA }\bigg\}\ .
\end{displaymath}
Recall that the $k$-corolla of $\mathscr{T}_k$ is the last element in the
above set. Note that the $k$-corolla is the only one which has one vertex. Consider $T_i\in \mathscr{T}_k$ which has at least two vertices. Then by a similar proof as the case $k=3$, we have
$$\ell'_{T_i}=0.$$
It follows that $\ell'_k$ only depends on the k-corolla and $$\ell'_k(h_1\otimes x_1,\cdots,h_k\otimes x_k)=h_1\cdot_H h_2\cdot_H\ldots\cdot_H h_k\otimes [x_1,\ldots, x_k],\quad k\ge3.$$ Thus the theorem is proved.
\end{proof}

\begin{example}
Consider the rational space $X$ with the same  cohomology algebra as $S^2\vee S^2\vee S^5$ (in fact, there are two rational homotopy types whose cohomology algebra is $H^\ast(S^2\vee S^2\vee S^5)$. Let $Y$ be the rational space with Sullivan model $(\Lambda(x, y, z),d)$ with $|x|=3$, $|y|=5$, $|z|=7$ and $dx=dy=0$, $dz=xy$. By Theorem \ref{theorem 1}, $\map(X, Y)\simeq\map(S^2\vee S^2\vee S^5, Y)$. Let $H=H^\ast(S^2\vee S^2\vee S^5)=\Q1\oplus\Q e_2\oplus\Q e'_2\oplus \Q e_5$, and let $L=\Q x_2\oplus\Q y_4\oplus \Q z_6$ with $[x_2, y_4]=z_6$ be the $L_\infty$-model of $Y$, where the subscripts denote degrees. Then $H\otimes L$ is the $L_\infty$-model of $\map(X, Y)$.  A basis for $H\otimes L$ is given by
\begin{align*}
\text{degree 0: } & e_2\otimes x,~e'_2\otimes x; \\
\text{degree 1: } &  e_5\otimes z;\\
\text{degree 2: } &1\otimes x,~e_2\otimes y,~e'_2\otimes y;\\
\text{degree 4: } & 1\otimes y,~e_2\otimes z,~e'_2\otimes z;\\
\text{degree 6: } &1\otimes z.
\end{align*}
The non-trivial $L_\infty$-structure is described by
$$[e_2\otimes x, 1\otimes y]=e_2\otimes z,~[e'_2\otimes x, 1\otimes y]=e'_2\otimes z,~[ 1\otimes x,e_2\otimes y]=e_2\otimes z,$$
$$~[ 1\otimes x,e'_2\otimes y]=e'_2\otimes z,~~[ 1\otimes x,1\otimes y]=1\otimes z.$$
A direct computation shows that $\map(X, Y)$ has two components. One has the  same rational homotopy type with $$K(\Q, 2)\times Z,$$where $Z$ 
is the rational space with minimal model
$$(\Lambda (a_1,a'_1,a_3,a'_3,a''_3,a_5,a'_5,a''_5,a_7),d)$$
with $da'_5=a_1a_5+a_3a'_3,$ $da''_5=a'_1a_5+a_3a''_3$ and  $da_7=a_3a_5$.
Another has the  same rational homotopy type with $$S^3\times S^3\times S^7\times Z',$$
where $Z'$ 
is the rational space with minimal model $$(\Lambda (a_1,a'_1,a_5,a'_5,a''_5,a_7),da'_5=a_1a_5,da''_5=a'_1a_5).$$

\end{example}
\subsection*{H-space structures of mapping space}

An interesting question is to determine whether a mapping space is of the rational homotopy type of an H-space. We show 
\begin{corollary}Let $Y$ be as in the Theorem \ref{theorem 1}. If $H^\ast(X)\simeq H^\ast(Y)$, then there exist a component of mapping space $\map(X,Y)$ which is a rational H-space.
\end{corollary}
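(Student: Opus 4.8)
The plan is to exploit the $L_\infty$-model $H^*(X)\otimes L$ for $\map(X,Y)$ that was established in the proof of Theorem~\ref{theorem 1}, together with the assumption $H^*(X)\cong H^*(Y)$. First I would observe that since $Y$ has the minimal Sullivan model $(\Lambda(P\oplus Q),d)$ with $dP=0$, its minimal $L_\infty$-model $L$ is concentrated in the correct degrees so that $H^*(Y)\cong H(L,\ell_1)$; in fact, by Lemma~\ref{lemma3} the transferred structure on $H^*(Y)\otimes L$ records the product of $H^*(Y)$ paired with the bracket of $L$. The key point is that when $H^*(X)\cong H^*(Y)$, I can use this algebra isomorphism to single out a distinguished Maurer--Cartan element of $H^*(X)\otimes L$ that corresponds to a map $f\colon X\to Y$ behaving like the identity (or a ``degree-one'' type map) at the level of cohomology algebras. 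The component $\map(X,Y;f)$ through this $f$ is the candidate H-space.

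The main step is then to analyze the $L_\infty$-structure on the component, i.e.\ the \emph{twisted} $L_\infty$-algebra $(H^*(X)\otimes L)^f$ obtained by twisting the brackets $\{\ell'_k\}$ by the chosen Maurer--Cartan element, since it is the twisted algebra (truncated to non-negative degrees) that models the based component $\map(X,Y;f)$. I would show that for this particular $f$ the twisted differential and higher brackets degenerate in a way that makes the component rationally an H-space. Concretely, I would invoke the result of Lupton--Smith~\cite{08L-S} that $\map(Y,Y;\mathrm{id})$ is group-like: because $H^*(X)\cong H^*(Y)$ as algebras and the model in Theorem~\ref{theorem 1} depends only on the cohomology algebra of the source and the rational homotopy type of the target, the component $\map(X,Y;f)$ has the same $L_\infty$-model, hence the same rational homotopy type, as a component of $\map(Y,Y;\mathrm{id})$. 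Thus the H-structure is transported along the isomorphism of $L_\infty$-models.

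The hard part will be making precise the passage from the algebra isomorphism $H^*(X)\cong H^*(Y)$ to an identification of the corresponding components of the two mapping spaces, including the matching of Maurer--Cartan elements. One must check that the isomorphism induces a bijection on path components that carries the relevant component of $\map(X,Y)$ to the identity component of $\map(Y,Y)$, and that this bijection is realized by an isomorphism of the twisted $L_\infty$-models so that rational homotopy types agree. This compatibility is where the degeneration of the higher brackets from Lemma~\ref{lemma3} is essential: it guarantees that the twisted structure is controlled entirely by the cohomology product and the bracket of $L$, so the isomorphism on the algebra side really does transfer to an $L_\infty$-isomorphism. Once this identification is in place, the conclusion that the component is a rational H-space follows from the group-like property of $\map(Y,Y;\mathrm{id})$.
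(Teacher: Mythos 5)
Your proposal is correct and follows essentially the same route as the paper: invoke Theorem \ref{theorem 1} to conclude $\map(X,Y)\simeq\map(Y,Y)$ from the algebra isomorphism $H^*(X)\cong H^*(Y)$, locate the component corresponding to $\map(Y,Y;\mathrm{id})$, and apply the Lupton--Smith group-like result. The paper's proof is just a two-line version of this; your extra discussion of Maurer--Cartan elements and twisted $L_\infty$-structures only spells out what the component-wise identification means at the model level and is not a different argument.
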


\begin{proof}
By Therorem \ref{theorem 1}, $\map(X,Y)\simeq\map(Y,Y).$ Then, there exist a component $\map(X,Y;f)$ of $\map(X,Y)$ such that $\map(X,Y;f)\simeq\map(Y,Y;id)$. Note that $\map(Y,Y;1)$ is a group-like space \cite[Theorem 3.6]{08L-S}. It follows that $\map(X,Y;f)$ is a rational H-space.
\end{proof}
\begin{example}
Let $Y$ be the rational space with Sullivan model $(\Lambda(x, y, z),d)$ with $|x|=3$, $|y|=5$, $|z|=7$ and $dx=dy=0$, $dz=xy$. Let $X$ with the commutative differential graded algebra model $H$ which is isomorphic to $H^\ast(Y)$. Note that $H=Q1\oplus \Q\overline{x}\oplus\Q\overline{y}\oplus\Q\overline{xz}\oplus\Q\overline{yz}\oplus\Q\overline{xyz}$ with the only non-trivial products $\overline{x}\cdot\overline{yz}=\overline{xz}\cdot\cdot\overline{y}=\overline{xyz}$. The spaces $X$ and $Y$ are not rational homotopy equivalent since $Y$ is not formal. A straightforward computation shows that the mapping space has three components and one of them is a group-like space:
$$K(\Q, 2)\times K(\Q, 2)\times K(\Q, 3)\times K(\Q, 7).$$
\end{example}

\subsection*{Classifying the component of the mapping spaces}
Let $X$ and $Y$ be as in the Theorem \ref{theorem 1}. A fundamental problem is to classify the components of $\map(X, Y)$ up to homotopy type. By Theorem \ref{theorem 1}, we have an $\L_\infty$-model $H\otimes L$ for the $\map(X, Y)$. Recall (\cite[Theorem 1.5]{15AB}) that $$[X, Y]\cong\mathscr{MC}(H\otimes L),$$ where the moduli space $\mathscr{MC}(H\otimes L)=\mathsf{MC}(H\otimes L)/\sim$ the quotient set of equivalence classes of  \emph{Maurer-Cartan element}. Let $z\in \mathsf{MC}(H\otimes L)$, we have a new $\L_\infty$-algebra $H\otimes L^z$ \cite[Proposition 4.4]{09EG}. The truncated and twisted $L_\infty$-algebra $(H\otimes L^z)_{\ge0}$ is an $L_\infty$-model for the component $\map(X, Y; f)$, where the  \emph{Maurer-Cartan element} $z$ represents the map $f\colon X\rightarrow Y$.

Given an isomorphism $\psi\colon H^\ast(X)\rightarrow H^{\ast} (X)$ between the cohomology algebra. Consider the linear map $\psi\otimes id\colon H\otimes L\rightarrow H\otimes L$, we have
\begin{proposition} If $h\otimes l\in \mathsf{MC}(H\otimes L)$, then $\psi\otimes id(h\otimes l)=\psi(h)\otimes l\in \mathsf{MC}(H\otimes L)$.
\end{proposition}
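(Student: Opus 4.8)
The plan is to recognize $\psi \otimes id$ as a strict automorphism of the transferred $L_\infty$-algebra $(H \otimes L, \{\ell'_k\})$ produced in Theorem \ref{theorem 1}, and then to invoke the elementary fact that a strict $L_\infty$-morphism carries Maurer--Cartan elements to Maurer--Cartan elements.

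First I would record the shape of the Maurer--Cartan equation in this setting: an element $\xi \in H \otimes L$ of the appropriate degree lies in $\mathsf{MC}(H \otimes L)$ exactly when $\sum_{k \ge 1} \frac{1}{k!} \ell'_k(\xi, \ldots, \xi) = 0$. Since $d = 0$ on $H = H(A,d)$ and $L$ is a \emph{minimal} $L_\infty$-model, the transferred structure is again minimal (its unary bracket vanishes), so only the higher brackets contribute; and by Theorem \ref{theorem 1} these are
$$\ell'_k(h_1 \otimes x_1, \ldots, h_k \otimes x_k) = \pm\, h_1 \cdot_H \cdots \cdot_H h_k \otimes [x_1, \ldots, x_k], \quad k \ge 2,$$
the sign being the Koszul sign incurred by interchanging the $H$- and $L$-factors.

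The key step is to check that $\psi \otimes id$ commutes with every bracket $\ell'_k$. Because $\psi$ is a \emph{graded} algebra automorphism of $H^\ast(X)$, it preserves degrees, so the Koszul signs above are unchanged after replacing each $h_j$ by $\psi(h_j)$; and because $\psi$ is multiplicative, $\psi(h_1) \cdot_H \cdots \cdot_H \psi(h_k) = \psi(h_1 \cdot_H \cdots \cdot_H h_k)$. Together these give
$$\ell'_k\big((\psi \otimes id)(h_1 \otimes x_1), \ldots, (\psi \otimes id)(h_k \otimes x_k)\big) = (\psi \otimes id)\, \ell'_k(h_1 \otimes x_1, \ldots, h_k \otimes x_k),$$
which extends to arbitrary inputs by bilinearity. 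Hence $\psi \otimes id$ is a strict $L_\infty$-endomorphism of $(H \otimes L, \{\ell'_k\})$, in fact an automorphism since $\psi$ is invertible.

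It then remains to apply $\psi \otimes id$ to the Maurer--Cartan equation for $\xi = h \otimes l$ and pull it through the sum term by term, using that it is linear, degree-preserving, and commutes with each $\ell'_k$; this yields $\sum_k \frac{1}{k!}\ell'_k((\psi \otimes id)\xi, \ldots, (\psi \otimes id)\xi) = (\psi \otimes id)(0) = 0$, so that $\psi(h) \otimes l \in \mathsf{MC}(H \otimes L)$. The only genuine obstacle I anticipate is the sign bookkeeping, and the point requiring care is precisely that $\psi$ preserves degrees so that the Koszul signs are undisturbed; the conceptual content is simply that an algebra automorphism tensored with the identity is compatible with the ``multiply-and-bracket'' form of the transferred $L_\infty$-structure.
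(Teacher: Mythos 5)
Your proposal is correct and follows essentially the same route as the paper: both apply $\psi\otimes\mathrm{id}$ to the Maurer--Cartan equation and use the formula $\ell'_k(h_1\otimes x_1,\ldots,h_k\otimes x_k)=\pm\,h_1\cdots h_k\otimes[x_1,\ldots,x_k]$ from Theorem \ref{theorem 1} together with the multiplicativity of $\psi$ to commute $\psi\otimes\mathrm{id}$ past each bracket. Your framing of this as the statement that $\psi\otimes\mathrm{id}$ is a strict $L_\infty$-automorphism, and your explicit attention to the Koszul signs, are slight refinements of presentation but not a different argument.
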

\begin{proof}
By Theorem \ref{theorem 1}, the $L_\infty$-structure $\{l_k\}_{k\ge1}$ of $H\otimes L$ is depended on the product of $H$ and the $L_\infty$-structure of $L$, i.e. $l_k(h\otimes l,\ldots, h\otimes l)=h\cdot_H h\ldots h\otimes [l,\ldots, l]_L$. Thus, 
\begin{align*}0&= \psi\otimes id\big(\sum_{k \geq 0} \frac{1}{k!} l_k(h\otimes l,\cdots,h\otimes l)\big)\\&= \sum_{k \geq 0} \frac{1}{k!} l_k(\psi(h)\otimes l,\ldots,\psi(h)\otimes l)\\&=\sum_{k \geq 0} \frac{1}{k!} \psi(h\cdot_H h\ldots h)\otimes [l,\ldots, l].\end{align*}
It follows that $\psi(h)\otimes l\in \mathsf{MC}(H\otimes L)$.
\end{proof}

Let the  \emph{Maurer-Cartan elements} $h\otimes l$ and $\psi(h)\otimes l$ repesent the maps $f$ and $f^\prime$ respectively. Note that, as graded vector spaces,
$$\psi\otimes id\colon(H\otimes L^{h\otimes l})_{\ge0}\stackrel{\cong}{\longrightarrow}(H\otimes L^{\psi(h)\otimes l})_{\ge0}.$$
It is now easily checked that this isomorphism is compatible with the  $L_\infty$-structure, i.e. $(H\otimes L^{h\otimes l})_{\ge0}$ and $(H\otimes L^{\psi(h)\otimes l})_{\ge0}$ are isomorphism of $L_\infty$-algebra. Thus, 
$$\map(X, Y; f)\simeq_\Q\map(X, Y; f^\prime).$$

\subsection*{Acknowledgments}
We thank the anonymous referee for helpful comments and suggestions.

\end{document}